\newtheorem{theorem}{Theorem}
\newtheorem{lemma}{Lemma}
\newtheorem{proposition}{Proposition}
\theoremstyle{remark}
\newtheorem{remark}{Remark}
\theoremstyle{definition}
\newtheorem{definition}{Definition}
\title{Non-Existence of Linear--Quartic Factorization for the Second Cuboid Quintic}
\author{Valery Asiryan\\[3pt]
\small \texttt{asiryanvalery@gmail.com}}
\date{\small January 5, 2026}
\begin{document}

\maketitle

\begin{abstract}
Let $Q_{p,q}(t)\in\mathbb{Z}[t]$ be Sharipov's even monic degree-$10$ \emph{second cuboid polynomial} depending on coprime integers $p\neq q>0$.
Writing $Q_{p,q}(t)$ as a quintic in $t^{2}$ produces an associated monic quintic polynomial.
After the weighted normalization $r=p/q$ and $s=r^{2}$ we obtain a one-parameter family $P_s(x)\in\mathbb{Q}[x]$ such that
\[
Q_{p,q}(t)=q^{20}\,P_s\!\left(\frac{t^{2}}{q^{4}}\right)\qquad\text{with}\qquad s=\left(\frac{p}{q}\right)^{2}.
\]
We show that for every rational $s>0$ with $s\neq 1$ the equation $P_s(x)=0$ has no rational solutions.
Equivalently, $P_s$ admits no $1+4$ factorization over $\mathbb{Q}$.

The proof uses an explicit quotient by the inversion involution $(s,y)\mapsto(1/s,1/y)$ and reduces the rational-root problem for $P_s$ to rational points on the fixed genus-$2$ hyperelliptic curve
\[
C:\quad w^2=t^5+21t^4+26t^3+10t^2+5t+1=(t+1)(t^4+20t^3+6t^2+4t+1).
\]
Using \textsc{Magma} and Chabauty's method on the Jacobian of $C$, we compute $C(\mathbb{Q})$ exactly and conclude that the only parameter value producing a rational root is the excluded case $s=1$ (equivalently $p=q$).

As a consequence, for coprime $p\neq q>0$ the polynomial $Q_{p,q}(t)$ has no rational roots (hence no linear factor over $\mathbb{Q}$, and in particular no linear factor over $\mathbb{Z}$).

\medskip
\noindent{\bf Keywords:}
perfect cuboid; cuboid polynomials; rational points; hyperelliptic curves; Jacobians; Chabauty method; \textsc{Magma}; computer-assisted proof.

\smallskip
\noindent{\bf Mathematics Subject Classification:} 11D41, 11G30, 14H25, 12E05, 11Y16.
\end{abstract}


\section{Introduction}\label{sec:intro}

The perfect cuboid problem asks for a rectangular box with integer edges such that all three face diagonals and the space diagonal are integers.
In a framework due to R.\,A.\,Sharipov \cite{Sharipov2011Cuboids,Sharipov2011Note}, one is led to explicit parameter-dependent even polynomials whose irreducibility is conjectured for coprime parameters.
In particular, Sharipov defines the \emph{second cuboid polynomial} $Q_{p,q}(t)\in\mathbb{Z}[t]$ of degree $10$ and formulates the conjecture that $Q_{p,q}(t)$ is irreducible over $\mathbb{Z}$ for coprime $p\neq q>0$.

\medskip
\noindent\textbf{Goal.}
We close the \emph{linear--quartic} case for the normalized associated quintic $P_s(x)$:
we prove that for every rational $s>0$ with $s\neq 1$ the quintic $P_s$ has no rational root (equivalently, it admits no $1+4$ factorization over $\mathbb{Q}$).
As a consequence, for coprime $p\neq q>0$ the second cuboid polynomial $Q_{p,q}(t)$ has no rational roots (and hence no linear factor).

\medskip
\noindent\textbf{Method.}
The key observation is an explicit inversion symmetry and a quotient reduction that transforms the rational-root condition for a one-parameter family of quintics into the computation of rational points on a fixed genus-$2$ hyperelliptic curve.
We then compute all rational points on that curve using \textsc{Magma}~\cite{Magma} (rank bound plus Chabauty on the Jacobian), yielding a certificate-style closure of the $1+4$ case.


\section{The second cuboid polynomial and its associated quintic}\label{sec:poly}

\subsection{Sharipov's second cuboid polynomial \texorpdfstring{$Q_{p,q}(t)$}{Qp,q(t)}}
Let $p,q\in\mathbb{Z}_{>0}$ be coprime and $p\neq q$.
The second cuboid polynomial is the even monic degree-$10$ polynomial
\begin{align}
Q_{p,q}(t)
={}&t^{10} + (2 q^{2} + p^{2})(3 q^{2} - 2 p^{2})\, t^{8} \nonumber\\
&+ (q^{8} + 10 p^{2} q^{6} + 4 p^{4} q^{4} - 14 p^{6} q^{2} + p^{8})\, t^{6} \nonumber\\
&- p^{2} q^{2}\,(q^{8} - 14 p^{2} q^{6} + 4 p^{4} q^{4} + 10 p^{6} q^{2} + p^{8})\, t^{4} \nonumber\\
&- p^{6} q^{6}\,(q^{2} + 2 p^{2})(-2 q^{2} + 3 p^{2})\, t^{2}
- p^{10} q^{10}\in\mathbb{Z}[t].\label{eq:Qpq}
\end{align}
This is the polynomial denoted $Q_{p,q}(t)$ in~\cite{Sharipov2011Cuboids,Sharipov2011Note}.

\subsection{Weighted normalization to \texorpdfstring{$Q_r(u)$}{Qr(u)}}
The polynomial \eqref{eq:Qpq} is weighted-homogeneous of total weight $20$ for
\[
\deg(p)=\deg(q)=1,\qquad \deg(t)=2,
\]
hence one may normalize to a one-parameter family.

\begin{lemma}[Normalization]\label{lem:normalize}
Let $q\neq 0$ and set
\[
r:=\frac{p}{q}\in\mathbb{Q},\qquad u:=\frac{t}{q^{2}}\in\mathbb{Q}.
\]
Then
\begin{equation}\label{eq:normalize}
Q_{p,q}(t)=q^{20}\,Q_r(u),
\end{equation}
where
\begin{equation}\label{eq:Qr}
\begin{aligned}
Q_r(u)
={}& u^{10} + (2+r^2)(3-2r^2)\,u^{8} + \bigl(1+10r^2+4r^4-14r^6+r^8\bigr)\,u^{6}\\
& - r^2\bigl(1-14r^2+4r^4+10r^6+r^8\bigr)\,u^{4}
 - r^6(1+2r^2)(-2+3r^2)\,u^{2}
 - r^{10}\in\mathbb{Q}[u].
\end{aligned}
\end{equation}
\end{lemma}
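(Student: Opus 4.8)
The plan is to verify \eqref{eq:normalize} by a direct term-by-term substitution, organized around the weighted-homogeneity structure noted just before the lemma. First I would record the weight of each monomial appearing in \eqref{eq:Qpq} under the grading $\deg p=\deg q=1$, $\deg t=2$: the pure power $t^{10}$ has weight $20$; in the $t^{8}$-term the coefficient $(2q^{2}+p^{2})(3q^{2}-2p^{2})$ has weight $4$ and $t^{8}$ weight $16$; in the $t^{6}$-term the octic-in-$(p,q)$ coefficient has weight $8$ and $t^{6}$ weight $12$; in the $t^{4}$-term the prefactor $p^{2}q^{2}$ contributes weight $4$, the inner octic weight $8$, and $t^{4}$ weight $8$; in the $t^{2}$-term the prefactor $p^{6}q^{6}$ has weight $12$, each of the two ``linear in the squares'' factors has weight $2$, and $t^{2}$ has weight $4$; and the constant term $-p^{10}q^{10}$ has weight $20$. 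Every monomial therefore carries total weight exactly $20$, confirming the weighted-homogeneity claim.

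Next I would set $p=rq$ and $t=uq^{2}$ (legitimate since $q\neq 0$, with $r,u\in\mathbb{Q}$) and substitute into \eqref{eq:Qpq} one term at a time. Because in each term the power of $q$ produced by the coefficient together with the power of $q$ from $t=uq^{2}$ sums to $20$, the factor $q^{20}$ comes out uniformly; what is left is, respectively, $u^{10}$; then $(2+r^{2})(3-2r^{2})\,u^{8}$, coming from $(2q^{2}+r^{2}q^{2})(3q^{2}-2r^{2}q^{2})\,t^{8}=q^{4}(2+r^{2})(3-2r^{2})\cdot u^{8}q^{16}$; then $(1+10r^{2}+4r^{4}-14r^{6}+r^{8})\,u^{6}$; then $-r^{2}(1-14r^{2}+4r^{4}+10r^{6}+r^{8})\,u^{4}$, from $-r^{2}q^{2}\cdot q^{2}\cdot q^{8}(1-14r^{2}+4r^{4}+10r^{6}+r^{8})\cdot u^{4}q^{8}$; then $-r^{6}(1+2r^{2})(-2+3r^{2})\,u^{2}$; and finally $-r^{10}$. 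Adding these and pulling out the common $q^{20}$ yields precisely $q^{20}Q_r(u)$ with $Q_r$ as in \eqref{eq:Qr}, establishing \eqref{eq:normalize}.

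I do not expect a genuine obstacle: the argument is mechanical. The only steps deserving a moment's care are the sign bookkeeping in the $t^{4}$- and $t^{2}$-terms, where the leading minus signs of \eqref{eq:Qpq} must be carried through unchanged, and the check that each $(p,q)$-coefficient collapses to the stated polynomial in $r^{2}$ once $p^{2}=r^{2}q^{2}$ is inserted. It is worth remarking in passing that $Q_r(u)$ depends on $r$ only through $r^{2}$ and on $u$ only through $u^{2}$ --- immediate from \eqref{eq:Qpq} being even in $t$ with every surviving $(p,q)$-monomial of even total degree --- which is exactly what licenses the later substitutions $s=r^{2}$ and the passage to a quintic in $x=u^{2}$ used in the introduction.
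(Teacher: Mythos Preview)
Your argument is correct and is exactly the paper's approach --- substitute $p=rq$, $t=uq^{2}$ into \eqref{eq:Qpq} and factor out $q^{20}$ --- just carried out in more explicit term-by-term detail than the paper's one-line proof. The additional weight-count and the observation that $Q_r(u)$ depends only on $r^{2}$ and $u^{2}$ are accurate and helpful, but add nothing beyond what the paper's bare substitution already implies.
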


\begin{proof}
Substitute $p=rq$ and $t=q^2u$ into \eqref{eq:Qpq} and factor out $q^{20}$.
\end{proof}

\subsection{The associated quintic \texorpdfstring{$P_s(x)$}{Ps(x)}}
Since $Q_r(u)$ is even, it is a quintic in $x=u^2$.

\begin{definition}[Second cuboid quintic]\label{def:Ps}
Let $s:=r^2\in\mathbb{Q}_{\ge 0}$.
Define $P_s(x)\in\mathbb{Q}[x]$ by the identity
\begin{equation}\label{eq:QrPs}
Q_r(u)=P_s(u^2).
\end{equation}
Equivalently, $P_s(x)$ is the monic quintic
\begin{equation}\label{eq:Ps}
\begin{aligned}
P_s(x)
={}&x^{5} + (2+s)(3-2s)\,x^{4} + (1+10s+4s^{2}-14s^{3}+s^{4})\,x^{3}\\
& - s(1-14s+4s^{2}+10s^{3}+s^{4})\,x^{2}
 - s^{3}(1+2s)(-2+3s)\,x
 - s^{5}.
\end{aligned}
\end{equation}
\end{definition}

\begin{proof}[Derivation]
Substitute $x=u^2$ into \eqref{eq:Qr} and set $s=r^2$.
\end{proof}

\begin{definition}[$1+4$ factorization for $P_s$]\label{def:14}
Let $K$ be a field of characteristic $0$.
We say that a monic quintic $P(x)\in K[x]$ admits a \emph{$1+4$ factorization over $K$} if it has a root in $K$ (equivalently $P(x)=(x-x_0)H(x)$ with $\deg H=4$).
\end{definition}

\begin{lemma}[Roots and even quadratic factors]\label{lem:14_28}
Let $p,q\in\mathbb{Z}_{>0}$, $q\neq 0$, and set $r=p/q$ and $s=r^2$.
Then the following are equivalent:
\begin{enumerate}
\item $P_s(x)$ has a rational root $x_0\in\mathbb{Q}$;
\item $Q_r(u)$ is divisible in $\mathbb{Q}[u]$ by the even quadratic factor $u^{2}-x_0$;
\item $Q_{p,q}(t)$ is divisible in $\mathbb{Q}[t]$ by the even quadratic factor $t^{2}-q^{4}x_0$.
\end{enumerate}
Moreover, if $Q_{p,q}(t)$ has a rational root $t_0\in\mathbb{Q}$, then $P_s$ has a rational root $x_0=(t_0/q^{2})^{2}\in\mathbb{Q}$.
\end{lemma}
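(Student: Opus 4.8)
The plan is to transport a single linear factor through the two substitution identities already established. Combining Lemma~\ref{lem:normalize} (which gives $Q_{p,q}(t)=q^{20}Q_r(u)$ with $u=t/q^{2}$) with Definition~\ref{def:Ps} (which gives $Q_r(u)=P_s(u^{2})$ with $s=r^{2}$) yields the master identity
\[
Q_{p,q}(t)=q^{20}\,P_s\!\bigl(t^{2}/q^{4}\bigr),
\]
and all three equivalences, together with the final clause, will follow from this identity and the polynomial division algorithm over $\mathbb{Q}$.

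First, for $(1)\Leftrightarrow(2)$ I would write $P_s(x)=(x-x_0)\,G(x)+P_s(x_0)$ with $G\in\mathbb{Q}[x]$, and substitute $x=u^{2}$ to get $Q_r(u)=(u^{2}-x_0)\,G(u^{2})+P_s(x_0)$; since $u^{2}-x_0$ is nonconstant, it divides $Q_r(u)$ in $\mathbb{Q}[u]$ exactly when the constant remainder $P_s(x_0)$ vanishes, i.e.\ exactly when $x_0$ is a root of $P_s$. Next, for $(2)\Leftrightarrow(3)$ I would use that $u\mapsto t/q^{2}$ is a $\mathbb{Q}$-algebra isomorphism $\mathbb{Q}[u]\xrightarrow{\sim}\mathbb{Q}[t]$, hence preserves divisibility; under it $Q_r(u)$ maps to $q^{-20}Q_{p,q}(t)$ and $u^{2}-x_0$ maps to $q^{-4}(t^{2}-q^{4}x_0)$, both changes being by units of $\mathbb{Q}^{\times}$, so $(u^{2}-x_0)\mid Q_r(u)$ in $\mathbb{Q}[u]$ if and only if $(t^{2}-q^{4}x_0)\mid Q_{p,q}(t)$ in $\mathbb{Q}[t]$. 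Concretely, if $Q_r(u)=(u^{2}-x_0)R(u)$ with $R\in\mathbb{Q}[u]$, then $Q_{p,q}(t)=q^{16}(t^{2}-q^{4}x_0)\,R(t/q^{2})$ with $q^{16}R(t/q^{2})\in\mathbb{Q}[t]$, and conversely one divides $Q_{p,q}(t)$ by $t^{2}-q^{4}x_0$ and undoes the substitution.

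Finally, the \emph{moreover} clause is immediate from the master identity: if $t_0\in\mathbb{Q}$ and $Q_{p,q}(t_0)=0$, then $q^{20}P_s(t_0^{2}/q^{4})=0$, and since $q\neq 0$ we obtain $P_s(x_0)=0$ for the rational number $x_0=(t_0/q^{2})^{2}$. I do not expect any genuine obstacle here; the statement is bookkeeping around two substitutions. The one point deserving care is the implication $(u^{2}-x_0)\mid P_s(u^{2})\Rightarrow P_s(x_0)=0$, which must be argued via the division algorithm rather than by plugging in a value of $u$ with $u^{2}=x_0$ (such a $u$ need not be rational); this is precisely why the argument above keeps the constant remainder explicit.
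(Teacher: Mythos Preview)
Your proposal is correct and follows essentially the same approach as the paper: both transport the linear factor through the substitution identities $Q_r(u)=P_s(u^2)$ and $Q_{p,q}(t)=q^{20}Q_r(t/q^2)$. The only cosmetic difference is that for the converse direction of $(1)\Leftrightarrow(2)$ the paper passes to the quotient ring $\mathbb{Q}[u]/(u^2-x_0)$ (where $u^2$ becomes the constant $x_0$), whereas you keep the explicit division-algorithm remainder $P_s(x_0)$; these are equivalent formulations of the same observation, and your care about not ``plugging in $u$ with $u^2=x_0$'' is exactly what the quotient-ring argument encodes.
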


\begin{proof}
The equivalence of (1) and (2) follows from \eqref{eq:QrPs}.
Indeed, if $P_s(x_0)=0$ then $(x-x_0)\mid P_s(x)$ in $\mathbb{Q}[x]$, hence $(u^2-x_0)\mid P_s(u^2)=Q_r(u)$ in $\mathbb{Q}[u]$.
Conversely, if $(u^2-x_0)\mid P_s(u^2)$, then in the quotient ring $\mathbb{Q}[u]/(u^2-x_0)$ we have $P_s(u^2)=P_s(x_0)=0$, hence $P_s(x_0)=0$ in $\mathbb{Q}$.

The equivalence of (2) and (3) follows from the normalization \eqref{eq:normalize}:
substitute $u=t/q^2$ and clear denominators to see that $u^2-x_0$ divides $Q_r(u)$ if and only if $t^2-q^4x_0$ divides $Q_{p,q}(t)$.

For the final claim, if $Q_{p,q}(t_0)=0$ with $t_0\in\mathbb{Q}$ then $Q_r(u_0)=0$ for $u_0=t_0/q^2\in\mathbb{Q}$, hence $P_s(u_0^2)=0$ and $x_0=u_0^2=(t_0/q^2)^2$ is a rational root of $P_s$.
\end{proof}

\begin{remark}\label{rem:domain}
In the original cuboid setting we have $p,q>0$, hence $r>0$ and $s=r^2\in\mathbb{Q}_{>0}$.
Moreover $p\neq q$ is equivalent to $r\neq 1$, i.e.\ $s\neq 1$.
\end{remark}


\section{Normalized root equation and inversion symmetry}\label{sec:involution}

Fix $s\in\mathbb{Q}_{>0}$.
A $1+4$ factorization of $P_s$ is equivalent to the existence of $x\in\mathbb{Q}$ such that $P_s(x)=0$.
Since $P_s(0)=-s^5\neq 0$ for $s\neq 0$, any root $x$ satisfies $x\neq 0$.
It is convenient to scale by $s$.

\begin{lemma}[Scaled root equation]\label{lem:F}
Let $s\in\mathbb{Q}\setminus\{0\}$ and set $x=s y$.
Then
\[
P_s(x)=0\quad\Longleftrightarrow\quad F(s,y)=0,
\]
where $F(s,y)\in\mathbb{Z}[s,y]$ is
\begin{equation}\label{eq:F}
\begin{aligned}
F(s,y)=\;& s^{2}y^{5} + (-2s^{3}-s^{2}+6s)y^{4} + (s^{4}-14s^{3}+4s^{2}+10s+1)y^{3}\\
&+(-s^{4}-10s^{3}-4s^{2}+14s-1)y^{2} + (-6s^{3}+s^{2}+2s)y - s^{2}.
\end{aligned}
\end{equation}
\end{lemma}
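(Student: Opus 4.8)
This is a routine but delicate algebraic identity, so the plan is to verify it by direct substitution and careful bookkeeping of coefficients, then argue the equivalence of the two vanishing conditions. First I would substitute $x = sy$ into the explicit expansion \eqref{eq:Ps} of $P_s(x)$. Term by term this gives
\[
P_s(sy) = s^5y^5 + (2+s)(3-2s)\,s^4y^4 + (1+10s+4s^2-14s^3+s^4)\,s^3y^3 - s(1-14s+4s^2+10s^3+s^4)\,s^2y^2 - s^3(1+2s)(-2+3s)\,sy - s^5.
\]
Every monomial on the right now carries a factor of $s^2$: the coefficients of $y^5,y^4,y^3,y^2,y,y^0$ are $s^5,\ s^4(2+s)(3-2s),\ s^3(1+10s+4s^2-14s^3+s^4),\ -s^3(1-14s+4s^2+10s^3+s^4),\ -s^4(1+2s)(-2+3s),\ -s^5$, and each is divisible by $s^2$ (the minimal exponent of $s$ appearing is $3$, except for the constant term $-s^5$, so in fact $s^3$ divides $P_s(sy)$; we only need $s^2$). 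Dividing by $s^2$ and expanding the two remaining products $(2+s)(3-2s) = 6 - s - 2s^2$ and $(1+2s)(-2+3s) = -2 - s + 6s^2$, I would collect coefficients of each power of $y$ and check that they match \eqref{eq:F}: the $y^4$ coefficient becomes $s^2(6 - s - 2s^2) = 6s^2 - s^3 - 2s^4$, wait — one must track the $s$-powers carefully, since dividing $s^4(6-s-2s^2)$ by $s^2$ gives $6s^2 - s^3 - 2s^4$, but \eqref{eq:F} lists $-2s^3 - s^2 + 6s$ for that slot, so in fact the intended scaling divides by $s^3$ on the interior terms; I would reconcile this by noting that $F(s,y)$ as written is $P_s(sy)/s^{3}$ re-expanded after using the homogeneity weight, and verify the final printed coefficients of \eqref{eq:F} literally equal the collected coefficients. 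The verification is purely mechanical polynomial arithmetic in $\mathbb{Z}[s,y]$.

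For the equivalence, observe that $s \in \mathbb{Q}\setminus\{0\}$, so $x = sy$ is a bijection $\mathbb{Q}\to\mathbb{Q}$ between values of $x$ and values of $y$, with inverse $y = x/s$. Since $P_s(sy)$ and $F(s,y)$ differ by multiplication by the nonzero scalar $s^{k}$ (with $k$ the common power factored out), we have $P_s(sy) = 0 \iff F(s,y) = 0$. Composing with the bijection $x \leftrightarrow y$ yields exactly the stated equivalence: $P_s(x) = 0$ for some $x \in \mathbb{Q}$ iff $F(s,y) = 0$ for some $y \in \mathbb{Q}$, and more precisely $P_s(x) = 0 \iff F(s, x/s) = 0$.

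The only real obstacle here is getting the coefficients of \eqref{eq:F} exactly right — in particular pinning down which power of $s$ is actually being divided out (it must be consistent across all six coefficient slots for $F$ to lie in $\mathbb{Z}[s,y]$ as claimed), and expanding the two quadratic products without sign errors. I would double-check the result by evaluating both $P_s(sy)$ and $s^{k}F(s,y)$ at a couple of small numeric pairs such as $(s,y) = (2,1)$ and $(s,y) = (1,2)$, and by confirming that $F$ is visibly not identically zero. Once the printed identity \eqref{eq:F} is confirmed, the lemma follows immediately; there is no deeper content, and this scaling is merely the setup for the inversion symmetry $x \mapsto s^2/x$ (i.e.\ $y \mapsto 1/y$ together with $s \mapsto 1/s$) exploited in the next section.
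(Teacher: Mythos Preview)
Your approach is the same as the paper's: substitute $x=sy$ into \eqref{eq:Ps} and divide through by a power of $s$, then read off the equivalence since $s\neq 0$. The only wobble is your initial guess of dividing by $s^{2}$; as you then discover, the correct normalization is $P_s(sy)=s^{3}F(s,y)$ (the minimal $s$-exponent among the six coefficients is $3$, occurring in the $y^{3}$ and $y^{2}$ terms), and with that the coefficients match \eqref{eq:F} exactly---so the proof is fine once you commit to $s^{3}$ from the start.
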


\begin{proof}
Substitute $x=sy$ into \eqref{eq:Ps} and divide the resulting identity by $s^3$ (valid for $s\neq 0$).
\end{proof}

\begin{lemma}[Inversion symmetry]\label{lem:inv}
The curve $F(s,y)=0$ is invariant under the involution
\[
(s,y)\longmapsto\left(\frac1s,\frac1y\right).
\]
More precisely,
\begin{equation}\label{eq:inv}
F\!\left(\frac1s,\frac1y\right)=-\frac{1}{s^{4}y^{5}}\,F(s,y).
\end{equation}
\end{lemma}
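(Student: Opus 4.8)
The plan is a direct verification that, after clearing denominators, \eqref{eq:inv} becomes an identity of polynomials in $\mathbb{Z}[s,y]$. First I would multiply \eqref{eq:inv} through by $-s^{4}y^{5}$ to obtain the equivalent claim
\[
-s^{4}y^{5}\,F\!\left(\tfrac1s,\tfrac1y\right)=F(s,y);
\]
this is genuinely polynomial, since $s^{4}$ absorbs the worst $s$-denominator $s^{-4}$ (produced by the $s^{4}$ terms in the $y^{2}$- and $y^{3}$-coefficients of $F$) and $y^{5}$ absorbs the $y$-denominators. Writing $F(s,y)=\sum_{i=0}^{5}a_i(s)\,y^{i}$ with $a_i\in\mathbb{Z}[s]$ of degree $\le 4$ read off from \eqref{eq:F}, the left-hand side is $\sum_{i=0}^{5}\bigl(-s^{4}a_i(1/s)\bigr)\,y^{5-i}$, so the lemma reduces to the six scalar identities $-s^{4}a_i(1/s)=a_{5-i}(s)$, $i=0,\dots,5$. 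Note that $s^{4}a_i(1/s)$ is just the coefficient-reversal of $a_i$, so each identity is a one-line check, and the six come in three involution-related pairs (applying $s\mapsto 1/s$ and multiplying by $-s^{4}$ interchanges $i$ and $5-i$); hence it suffices to verify the pairs $(a_0,a_5)$, $(a_1,a_4)$, $(a_2,a_3)$.

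Second, I would carry out those three checks. For example $a_1(s)=-6s^{3}+s^{2}+2s$ reverses to $-s^{4}a_1(1/s)=-2s^{3}-s^{2}+6s=a_4(s)$, and $a_2(s)=-s^{4}-10s^{3}-4s^{2}+14s-1$ reverses to $-s^{4}a_2(1/s)=s^{4}-14s^{3}+4s^{2}+10s+1=a_3(s)$; the pair $(a_0,a_5)=(-s^{2},s^{2})$ is immediate. Summing $\sum_i a_{5-i}(s)\,y^{5-i}=F(s,y)$ then gives \eqref{eq:inv}, and since $(s,y)\mapsto(1/s,1/y)$ is visibly its own inverse on $\mathbb{Q}(s,y)$, the curve $F=0$ is invariant under it. The main ``obstacle'' here is purely bookkeeping: signs, and keeping the coefficient reversal straight (one can equally well have \textsc{Magma} confirm the identity symbolically).

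Finally, although the computation above is self-contained, I would add a remark explaining why such a symmetry should exist. Under the substitution $x=sy$ of Lemma~\ref{lem:F}, the involution $(s,y)\mapsto(1/s,1/y)$ becomes $(s,x)\mapsto(1/s,1/x)$, and a direct coefficient comparison in \eqref{eq:Ps} shows that $P_s$ is anti-reciprocal up to inversion of the parameter, namely $P_{1/s}(1/x)=-s^{-5}x^{-5}P_s(x)$. Substituting $x=sy$ back in and using $F(s,y)=s^{-3}P_s(sy)$ from the derivation of \eqref{eq:F}, the powers of $s$ combine ($-5-5+3+3=-4$) to produce exactly the factor $s^{-4}$ in \eqref{eq:inv}. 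This is a conceptual proof of the lemma; the coefficient-by-coefficient verification is just a convenient way to make it unconditional.
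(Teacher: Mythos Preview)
Your proof is correct and is precisely the ``direct verification from \eqref{eq:F} by substitution and simplification'' that the paper invokes, only spelled out in full via the coefficient-reversal identities $-s^{4}a_i(1/s)=a_{5-i}(s)$. The added conceptual remark tracing the symmetry back to the anti-reciprocity $P_{1/s}(1/x)=-s^{-5}x^{-5}P_s(x)$ is a nice bonus that the paper does not include, but the core argument is the same.
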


\begin{proof}
This is a direct verification from \eqref{eq:F} by substitution and simplification.
\end{proof}


\section{Quotient by inversion and a rational parametrization}\label{sec:quotient}

By Lemma~\ref{lem:inv}, it is natural to pass to invariants of the inversion involution.

\begin{definition}[Invariants]\label{def:UV}
Define
\[
U:=s+\frac1s,\qquad V:=y+\frac1y.
\]
Equivalently, $s$ and $y$ satisfy the quadratics
\begin{equation}\label{eq:quadUV}
s^{2}-Us+1=0,\qquad y^{2}-Vy+1=0.
\end{equation}
\end{definition}

\begin{proposition}[Elimination to a plane curve $G(U,V)=0$]\label{prop:G}
Let $(s,y)\in(\mathbb{Q}\setminus\{0\})^2$ satisfy $F(s,y)=0$.
Then $(U,V)\in\mathbb{Q}^2$ defined by Definition~\ref{def:UV} satisfies
\begin{equation}\label{eq:G}
G(U,V)=0,
\end{equation}
where $G\in\mathbb{Z}[U,V]$ is the degree-$5$ polynomial in $V$
\begin{equation}\label{eq:Gpoly}
\begin{aligned}
G(U,V)=\;&V^{5}+(4U-2)V^{4}+(-10U^{2}-8U+64)V^{3}\\
&+(4U^{3}-108U^{2}+384)V^{2}+(U^{4}-8U^{3}-192U^{2}+768)V\\
&+(-2U^{4}-128U^{2}+512).
\end{aligned}
\end{equation}
\end{proposition}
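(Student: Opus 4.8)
Let me sketch how I would establish the elimination statement. The goal is to show that if $F(s,y)=0$ with $s,y\neq 0$, then the symmetric functions $U=s+1/s$ and $V=y+1/y$ satisfy $G(U,V)=0$ with $G$ as in \eqref{eq:Gpoly}.

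The plan is to work in the ring $R=\mathbb{Q}[s,y,s^{-1},y^{-1}]$ and exploit the inversion symmetry of Lemma \ref{lem:inv} directly. Since $F(1/s,1/y)=-s^{-4}y^{-5}F(s,y)$, the element $F$ and its "inversion partner" generate the same ideal (up to a unit), so the quantity
\[
\widetilde F(s,y):=s^{-1}y^{-5/2}\bigl(\text{symmetrization of }F\bigr)
\]
is fixed by the involution in a suitable sense. More concretely — and avoiding half-integer powers — I would first observe that $F(s,y)=0$ together with its inverted form $F(1/s,1/y)=0$ are equivalent as conditions (for $s,y\neq 0$), and then form the combination $s^2 y^5 F(1/s,1/y)$, which lies in $\mathbb{Z}[s,y]$ and equals $-F(s,y)$; thus $F(s,y)=0$ is genuinely an inversion-invariant condition. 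The first real step is therefore to rewrite $F(s,y)=0$ in terms of the "balanced" monomials. Dividing $F(s,y)$ by $s\,y^{5/2}$ is illegal, so instead I divide by $s$ and group the $y$-powers symmetrically around $y^{5/2}$: writing each coefficient block as a polynomial in $U=s+1/s$ after dividing by $s$. Explicitly, $F(s,y)/s = s y^5 + (\cdots)y^4 + \cdots + (\cdots)y - s^{-1}\cdot s^2/s$; one checks the coefficient of $y^k$ and of $y^{5-k}$ combine into polynomials in $U$.

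The cleaner route, which I would actually carry out: substitute $s = (U+\sqrt{U^2-4})/2$ (a root of $s^2-Us+1=0$) and $y=(V+\sqrt{V^2-4})/2$ formally, or better, work in the quotient ring
\[
A := \mathbb{Z}[U,V,s,y]\big/\bigl(s^2-Us+1,\ y^2-Vy+1\bigr),
\]
which is free of rank $4$ over $\mathbb{Z}[U,V]$ with basis $\{1,s,y,sy\}$. Reduce $F(s,y)$ modulo the two quadratic relations to express it as $F \equiv a(U,V) + b(U,V)s + c(U,V)y + d(U,V)sy$ in $A$. Because $F(s,y)=0$ and $F(1/s,1/y)=0$ both hold, and the involution $s\mapsto 1/s=U-s$, $y\mapsto 1/y=V-y$ is the nontrivial automorphism fixing $\mathbb{Z}[U,V]$, averaging $F$ over this involution kills the "odd" part and shows $a=0$ forces the trace of $F$ to vanish; more precisely, $F=0$ in $A$ implies its norm down to $\mathbb{Z}[U,V]$ vanishes. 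Taking the resultant (or norm) $\mathrm{Res}_s\bigl(s^2-Us+1,\ \mathrm{Res}_y(y^2-Vy+1,\ F(s,y))\bigr)$ produces a polynomial in $\mathbb{Z}[U,V]$ that must vanish; I would compute this resultant and check it equals $G(U,V)$ times a harmless nonzero constant (a power of $\pm 1$ or a small integer from the leading coefficients), after removing any spurious factors coming from $s=0$ or $y=0$ — which are excluded by hypothesis. The degree bookkeeping is a sanity check: $F$ has degree $5$ in $y$, taking $\mathrm{Res}_y$ against a quadratic gives degree $\le 10$ in $V$ naively, but the symmetry collapses it, and one expects exactly degree $5$ in $V$, matching \eqref{eq:Gpoly}.

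The main obstacle — really the only one — is the bookkeeping in the resultant/norm computation: ensuring that the polynomial one extracts is \emph{exactly} $G(U,V)$ and not a multiple of it by an extraneous factor (for instance a power of $s$ or $y$ picked up when clearing denominators, or a factor like $V^2-4$ coming from the discriminant of the $y$-quadratic). I would handle this by first reducing $F$ in the free module $A$ over $\mathbb{Z}[U,V]$ using the basis $\{1,s,y,sy\}$ — this keeps everything polynomial and makes the extraneous factors transparent — and then verifying the identity $G(U,V) = \pm\,\mathrm{Norm}_{A/\mathbb{Z}[U,V]}(F)$ coefficient by coefficient (a finite check, done once). The fact that $F(s,y)=0$ actually implies $G(U,V)=0$ (rather than merely dividing it) then follows because the norm of $0$ is $0$; no converse is claimed in this proposition, so we need not worry about whether $G(U,V)=0$ forces $F(s,y)=0$.
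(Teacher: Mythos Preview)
Your approach is essentially the paper's: it too computes the double resultant $\mathrm{Res}_{y}\bigl(\mathrm{Res}_{s}(F,\,s^{2}-Us+1),\,y^{2}-Vy+1\bigr)$, which is exactly the norm $\mathrm{Nm}_{A/\mathbb{Z}[U,V]}(F)$ in your rank-$4$ algebra $A$, and then identifies the result. The one point you left uncertain is resolved there: the resultant is not $\pm G$ but $G(U,V)^{2}$ on the nose---the square arises precisely from the inversion symmetry (Lemma~\ref{lem:inv}), which pairs the four conjugates $F(s^{\pm1},y^{\pm1})$ into two proportional pairs---so there are no spurious factors to remove and $F=0$ gives $G=0$ immediately.
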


\begin{proof}
By \eqref{eq:quadUV}, the existence of $(s,y)$ with given $(U,V)$ is equivalent to the vanishing of the double resultant
\[
\mathrm{Res}_{y}\!\left(\mathrm{Res}_{s}\bigl(F(s,y),\,s^{2}-Us+1\bigr),\,y^{2}-Vy+1\right)\in\mathbb{Z}[U,V].
\]
A direct elimination yields that this resultant equals $G(U,V)^2$ (no additional factors occur), and hence $G(U,V)=0$ for any solution $(s,y)$.
\end{proof}

\begin{proposition}[Rationality and parametrization]\label{prop:param}
The affine curve $G(U,V)=0$ has a singular point at $(U,V)=(2,-2)$ of multiplicity $4$, hence it is a rational curve.
Moreover, the family of lines through $(2,-2)$
\[
V+2=t(U-2),\qquad t\in\mathbb{P}^{1},
\]
parametrizes $G=0$ as
\begin{equation}\label{eq:UVparam}
U=U(t),\qquad V=V(t),
\end{equation}
where
\begin{equation}\label{eq:Uparam}
U(t)=\frac{2\,(t^{5}+6t^{4}+30t^{3}+16t^{2}+9t+2)}{t\,(t-1)^{2}(t^{2}+6t+1)},
\end{equation}
\begin{equation}\label{eq:Vparam}
V(t)=\frac{2\,(t^{4}+36t^{3}+22t^{2}+4t+1)}{(t-1)^{2}(t^{2}+6t+1)}.
\end{equation}
\end{proposition}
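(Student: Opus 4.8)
The plan is to view $\{G=0\}$ as a plane quintic carrying a point of the largest possible multiplicity and to recover it, together with the explicit parametrization, by projecting from that point.

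\textbf{Step 1: the singular point and its multiplicity.} First I would substitute $(U,V)=(2,-2)$ into \eqref{eq:Gpoly} and check $G(2,-2)=0$. Then I would translate the point to the origin by $u=U-2$, $v=V+2$ and expand $G(2+u,-2+v)=\sum_{i,j}c_{ij}\,u^{i}v^{j}$. The goal is to verify that $c_{ij}=0$ for every $i+j\le 3$ while the quartic form $G_4(u,v):=\sum_{i+j=4}c_{ij}\,u^{i}v^{j}$ is not identically zero; this is exactly the assertion that $(2,-2)$ is a point of multiplicity $4$. A convenient partial check along the way is the one-variable identity $G(2,V)=(V+2)^4(V-2)$, which already shows that the slice $u=0$ of the expansion has no term of degree $<4$. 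Since a plane curve of degree $5$ carrying a point of multiplicity $5-1=4$ has geometric genus $0$, it is rational and is swept out by the pencil of lines through $(2,-2)$.

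\textbf{Step 2: projecting from $(2,-2)$.} Next I would substitute the generic line $V=-2+t(U-2)$ through $(2,-2)$ into $G(U,V)=0$. Writing $u=U-2$, the left side becomes $u^{4}G_4(1,t)+u^{5}G_5(1,t)$, where $G_5$ is the (translation-invariant) top-degree part $G_5(u,v)=v(v^4+4uv^3-10u^2v^2+4u^3v+u^4)$; hence the residual intersection point away from $u=0$ is
\[
U(t)-2=-\frac{G_4(1,t)}{G_5(1,t)},\qquad V(t)+2=t\bigl(U(t)-2\bigr).
\]
Here $G_5(1,t)=t(t-1)^2(t^2+6t+1)$ by a direct computation, and the expansion of Step 1 should give $G_4(1,t)=-4\,(t^4+20t^3+6t^2+4t+1)$. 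Substituting and putting $U(t),V(t)$ over a common denominator then yields precisely \eqref{eq:Uparam} and \eqref{eq:Vparam}; the remaining work is mechanical clearing of denominators and factoring, and the outcome is certified by checking the polynomial identity $G\bigl(U(t),V(t)\bigr)\equiv 0$. It is worth recording that the quartic $t^4+20t^3+6t^2+4t+1$ produced here is the one that reappears in the hyperelliptic curve $C$ (its roots are the four non-vertical tangent directions at $(2,-2)$), while $t\in\{0,1\}$ and the roots of $t^2+6t+1$ are the slopes for which the residual point escapes to infinity.

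\textbf{Step 3: birationality.} Finally, $t\mapsto(U(t),V(t))$ is birational onto $\{G=0\}$: for $(U_0,V_0)\in\{G=0\}$ with $(U_0,V_0)\ne(2,-2)$ one has $U_0\ne 2$ except for the single point $(2,2)$ (because $G(2,V)=(V+2)^4(V-2)$), and setting $t_0=(V_0+2)/(U_0-2)\in\mathbb{Q}$ — or $t_0=\infty$ in the exceptional case — the line $\ell_{t_0}$ meets $\{G=0\}$ only at $(2,-2)$ (with multiplicity $4$) and at its residual point $(U(t_0),V(t_0))$, so $(U_0,V_0)=(U(t_0),V(t_0))$. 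Conversely the slope reconstructs $t$ from a generic point, so the map has degree $1$; its image is then an irreducible curve of degree $5$ inside the quintic $\{G=0\}$, hence equals $\{G=0\}$, which in particular shows $G$ irreducible over $\mathbb{Q}$. This proves \eqref{eq:Uparam}--\eqref{eq:Vparam} and shows that every rational point of $\{G=0\}$ other than $(2,-2)$ equals $(U(t_0),V(t_0))$ for a unique $t_0\in\mathbb{P}^1(\mathbb{Q})$. The only laborious part is the elimination and simplification in Step 2, which is entirely routine and presents no conceptual obstacle; it is readily confirmed with a symbolic computation.
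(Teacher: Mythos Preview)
Your proposal is correct and follows essentially the same approach as the paper: both substitute the pencil $V=t(U-2)-2$ into $G$, factor out $(U-2)^4$, and solve the linear residual for $U$. Your write-up is more explicit (translating to the origin, separating the quartic and quintic homogeneous parts, and noting the birational inverse $t=(V+2)/(U-2)$), whereas the paper simply records the factorization $G(U,t(U-2)-2)=(U-2)^4\bigl(U\cdot G_5(1,t)-2(t^5+6t^4+30t^3+16t^2+9t+2)\bigr)$ and reads off $U(t)$ directly; the content is the same.
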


\begin{proof}
Substitute $V=t(U-2)-2$ into $G(U,V)$.
A direct expansion and factorization gives
\[
G\bigl(U,\,t(U-2)-2\bigr)=(U-2)^{4}\cdot\Bigl(U\cdot\bigl(t^{5}+4t^{4}-10t^{3}+4t^{2}+t\bigr)-\bigl(2t^{5}+12t^{4}+60t^{3}+32t^{2}+18t+4\bigr)\Bigr).
\]
This shows that $(U,V)=(2,-2)$ is a multiplicity-$4$ singular point and that the remaining intersection is governed by a linear equation in $U$.
Solving for $U$ yields \eqref{eq:Uparam}, and then $V=t(U-2)-2$ gives \eqref{eq:Vparam}.
\end{proof}


\section{Lifting back to \texorpdfstring{$(s,y)$}{(s,y)} and a genus-\texorpdfstring{$2$}{2} obstruction curve}\label{sec:hyper}

The parametrization \eqref{eq:UVparam} describes all rational points on the quotient curve $G(U,V)=0$.
However, we must also enforce that $U$ and $V$ lift to rational $s$ and $y$ via \eqref{eq:quadUV}.

\begin{lemma}[Square conditions]\label{lem:squares}
Let $(s,y)\in(\mathbb{Q}\setminus\{0\})^2$ and define $(U,V)$ as in Definition~\ref{def:UV}.
Then
\[
U^{2}-4=\left(s-\frac1s\right)^{2}\in(\mathbb{Q}^{\times})^{2}\cup\{0\},\qquad
V^{2}-4=\left(y-\frac1y\right)^{2}\in(\mathbb{Q}^{\times})^{2}\cup\{0\}.
\]
Conversely, given $U,V\in\mathbb{Q}$, the quadratics \eqref{eq:quadUV} have solutions $s,y\in\mathbb{Q}$ if and only if $U^{2}-4$ and $V^{2}-4$ are squares in $\mathbb{Q}$.
\end{lemma}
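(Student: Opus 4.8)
The plan is to deduce both directions from one elementary identity together with the rational quadratic formula, so that the whole statement reduces to understanding when the discriminant of $s^{2}-Us+1$ is a rational square.

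For the forward implication I would expand directly: from $U=s+1/s$ one obtains
\[
U^{2}-4=\left(s+\frac1s\right)^{2}-4=s^{2}-2+\frac1{s^{2}}=\left(s-\frac1s\right)^{2},
\]
and the identical computation gives $V^{2}-4=(y-1/y)^{2}$. Since $s,y\in\mathbb{Q}^{\times}$, the quantities $s-1/s$ and $y-1/y$ are rational, so their squares lie in $\mathbb{Q}$; each such square is nonzero precisely when $s\neq\pm1$ (resp.\ $y\neq\pm1$) and is $0$ otherwise, which is exactly the asserted membership in $(\mathbb{Q}^{\times})^{2}\cup\{0\}$.

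For the converse I would observe that the quadratic $s^{2}-Us+1=0$ with $U\in\mathbb{Q}$ has a root in $\mathbb{Q}$ if and only if its discriminant $U^{2}-4$ is a square in $\mathbb{Q}$: if $U^{2}-4=d^{2}$ with $d\in\mathbb{Q}$ then $s=(U\pm d)/2\in\mathbb{Q}$, and conversely any rational root $s$ satisfies $2s-U=\pm\sqrt{U^{2}-4}$, whence $U^{2}-4=(2s-U)^{2}$ is a rational square. Because the product of the two roots of $s^{2}-Us+1$ equals the constant term $1$, no root can vanish, so a rational root automatically lies in $\mathbb{Q}^{\times}$. Running the same argument for $y^{2}-Vy+1=0$ handles $V$ and $y$, and conjoining the two quadratic conditions gives the stated equivalence for the pair $(s,y)\in(\mathbb{Q}\setminus\{0\})^{2}$.

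There is no real obstacle here: the lemma is a routine application of the quadratic formula over $\mathbb{Q}$. The only point worth making explicit is that the constant term $1$ forces the lifted $s$ and $y$ to be nonzero, so the equivalence lands in the punctured domain $(\mathbb{Q}\setminus\{0\})^{2}$ used throughout the rest of the paper rather than merely in $\mathbb{Q}^{2}$.
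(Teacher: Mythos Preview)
Your proof is correct and is essentially the same approach as the paper's, which simply says ``this follows by completing the square in \eqref{eq:quadUV}''; you have spelled out that completion explicitly via the identity $(s+1/s)^2-4=(s-1/s)^2$ and the discriminant criterion for the quadratic formula. The extra remark that the constant term $1$ forces any rational root to be nonzero is a nice clarification that the paper leaves implicit.
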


\begin{proof}
This follows by completing the square in \eqref{eq:quadUV}.
\end{proof}

\begin{proposition}[Reduction to a genus-$2$ hyperelliptic curve]\label{prop:hyper}
Let $(s,y)\in(\mathbb{Q}\setminus\{0\})^{2}$ satisfy $F(s,y)=0$ and assume $s\neq 1$.
Then there exists $t\in\mathbb{Q}$ and $w\in\mathbb{Q}$ such that
\begin{equation}\label{eq:C}
w^{2}=t^{5}+21t^{4}+26t^{3}+10t^{2}+5t+1.
\end{equation}
Equivalently, there exists a rational point on the genus-$2$ hyperelliptic curve
\begin{equation}\label{eq:Ccurve}
C:\quad w^{2}=(t+1)(t^{4}+20t^{3}+6t^{2}+4t+1).
\end{equation}
\end{proposition}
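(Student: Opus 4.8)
The plan is to run the construction of Sections~\ref{sec:quotient}--\ref{sec:hyper} in reverse, carrying along the square condition of Lemma~\ref{lem:squares}. Start from a solution $(s,y)\in(\mathbb{Q}\setminus\{0\})^{2}$ of $F(s,y)=0$ with $s\neq 1$, and set $U:=s+1/s$ and $V:=y+1/y$. Since $s+1/s=2$ forces $(s-1)^2=0$, the hypothesis $s\neq 1$ gives $U\neq 2$; in particular $(U,V)\neq(2,-2)$. By Proposition~\ref{prop:G} we have $G(U,V)=0$, so $(U,V)$ is a rational point on the rational curve $G=0$ away from its unique singular point.

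Next I would recover the line parameter. Put $t:=(V+2)/(U-2)\in\mathbb{Q}$, which is legitimate because $U\neq 2$, so that $V=t(U-2)-2$. Substituting this into the identity from the proof of Proposition~\ref{prop:param} gives
\[
G\bigl(U,\,t(U-2)-2\bigr)=(U-2)^{4}\bigl(U\,\ell_{1}(t)-\ell_{2}(t)\bigr),
\]
where $\ell_{1}(t)=t(t-1)^{2}(t^{2}+6t+1)=t^{5}+4t^{4}-10t^{3}+4t^{2}+t$ and $\ell_{2}(t)=2(t^{5}+6t^{4}+30t^{3}+16t^{2}+9t+2)$. Combining $G(U,V)=0$ with $U\neq 2$ yields $U\,\ell_{1}(t)=\ell_{2}(t)$. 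The only rational zeros of $\ell_{1}$ are $t\in\{0,1\}$ (the quadratic $t^{2}+6t+1$ having no rational root), and $\ell_{2}(0)=4$, $\ell_{2}(1)=128$ are nonzero; hence $\ell_{1}(t)\neq 0$, so $t\notin\{0,1\}$, $t^{2}+6t+1\neq 0$, and $U=\ell_{2}(t)/\ell_{1}(t)=U(t)$, $V=t(U-2)-2=V(t)$ in the notation of \eqref{eq:Uparam}--\eqref{eq:Vparam}.

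Finally I would impose the lifting constraint. Since $s\in\mathbb{Q}^{\times}$, Lemma~\ref{lem:squares} gives that $U^{2}-4=(s-1/s)^{2}$ is a square in $\mathbb{Q}$. Writing $U(t)=\ell_{2}(t)/\ell_{1}(t)$, forming $U(t)^{2}-4=(\ell_{2}(t)^{2}-4\ell_{1}(t)^{2})/\ell_{1}(t)^{2}$, and factoring the numerator as a difference of squares using $\ell_{2}(t)+2\ell_{1}(t)=4(t+1)^{5}$ and $\ell_{2}(t)-2\ell_{1}(t)=4(t^{4}+20t^{3}+6t^{2}+4t+1)$, one gets
\[
U(t)^{2}-4=\left(\frac{4(t+1)^{2}}{t(t-1)^{2}(t^{2}+6t+1)}\right)^{\!2}\bigl(t+1\bigr)\bigl(t^{4}+20t^{3}+6t^{2}+4t+1\bigr),
\]
together with $(t+1)(t^{4}+20t^{3}+6t^{2}+4t+1)=t^{5}+21t^{4}+26t^{3}+10t^{2}+5t+1$. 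Let $\lambda(t)\in\mathbb{Q}$ denote the rational scalar in the first factor; for our $t$ it vanishes precisely when $t=-1$. If $\lambda(t)\neq 0$, then $t^{5}+21t^{4}+26t^{3}+10t^{2}+5t+1=(U^{2}-4)/\lambda(t)^{2}$ is the square of a rational number, giving $w\in\mathbb{Q}$ with $w^{2}=t^{5}+21t^{4}+26t^{3}+10t^{2}+5t+1$; if $\lambda(t)=0$ then $t=-1$, the quintic vanishes at $t=-1$, and $w=0$ works. In either case $(t,w)\in\mathbb{Q}^{2}$ lies on $C$, and the factored form \eqref{eq:Ccurve} follows by extracting the root $t=-1$.

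I expect the main difficulty to be organizational rather than computational: one must carefully track the degenerate values ($U=2$ excluded by $s\neq1$; $t\in\{0,1\}$ excluded since $\ell_{2}$ does not vanish there; $t=-1$ handled by the trivial point; the irrational zeros of $t^{2}+6t+1$ automatically avoided over $\mathbb{Q}$) so that the chain $(s,y)\rightsquigarrow(U,V)\rightsquigarrow t\rightsquigarrow(t,w)$ is everywhere well-defined. The only genuine calculation is the difference-of-squares factorization of $U(t)^{2}-4$, which is mechanical once the two identities $\ell_{2}\pm 2\ell_{1}=4(t+1)^{5},\ 4(t^{4}+20t^{3}+6t^{2}+4t+1)$ are verified.
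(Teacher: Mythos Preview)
Your proof is correct and follows essentially the same route as the paper's: pass to the invariants $(U,V)$, apply Proposition~\ref{prop:G} and the parametrization of Proposition~\ref{prop:param} to recover $t=(V+2)/(U-2)$, and then invoke Lemma~\ref{lem:squares} on $U^{2}-4$ to force $(t+1)(t^{4}+20t^{3}+6t^{2}+4t+1)$ to be a rational square. You are somewhat more explicit than the paper on two points---deriving \eqref{eq:Udisc} via the difference-of-squares identities $\ell_{2}\pm 2\ell_{1}=4(t+1)^{5},\ 4(t^{4}+20t^{3}+6t^{2}+4t+1)$, and treating the boundary values $t\in\{0,1,-1\}$ individually---but this is added care within the same argument, not a different method.
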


\begin{proof}
Let $U=s+1/s$ and $V=y+1/y$.
By Proposition~\ref{prop:G} we have $G(U,V)=0$.
Since $s\neq 1$, we have $U\neq 2$ and therefore we can define
\[
t:=\frac{V+2}{U-2}\in\mathbb{Q}.
\]
By Proposition~\ref{prop:param}, this $t$ corresponds to the line through the singular point $(2,-2)$ and $(U,V)$, hence $(U,V)$ is represented by $U=U(t)$ and $V=V(t)$.

Now compute (using \eqref{eq:Uparam}--\eqref{eq:Vparam}) that
\begin{equation}\label{eq:Udisc}
U(t)^{2}-4=\frac{16\,(t+1)^{5}(t^{4}+20t^{3}+6t^{2}+4t+1)}{t^{2}(t-1)^{4}(t^{2}+6t+1)^{2}},
\end{equation}
\begin{equation}\label{eq:Vdisc}
V(t)^{2}-4=\frac{256\,t^{2}(t+1)(t^{4}+20t^{3}+6t^{2}+4t+1)}{(t-1)^{4}(t^{2}+6t+1)^{2}}.
\end{equation}
The denominators in \eqref{eq:Udisc} and \eqref{eq:Vdisc} are squares in $\mathbb{Q}$, and so are the factors $16$, $256$, $(t+1)^4$, and $t^2$.
Therefore $U^{2}-4$ and $V^{2}-4$ being squares (Lemma~\ref{lem:squares}) implies that
\[
(t+1)(t^{4}+20t^{3}+6t^{2}+4t+1)
\]
is a square in $\mathbb{Q}$, i.e.\ there exists $w\in\mathbb{Q}$ satisfying \eqref{eq:Ccurve}.
Expanding the right-hand side gives \eqref{eq:C}.
\end{proof}

\begin{remark}[Special values $t=0$ and $t=1$]
The parametrization \eqref{eq:Uparam}--\eqref{eq:Vparam} has poles at $t=0$ and $t=1$; geometrically these correspond to directions through the singular point $(2,-2)$ for which the line does not meet the curve $G=0$ at any other finite point.
In our application $s\neq 1$ implies $U\neq 2$, so $t=(V+2)/(U-2)$ is well-defined and does not correspond to the singular point.
\end{remark}


\section{Rational points on \texorpdfstring{$C$}{C} and closure of the \texorpdfstring{$1+4$}{1+4} case}\label{sec:points}

\begin{proposition}[Rational points on $C$]\label{prop:CQ}
Let $C$ be the hyperelliptic curve \eqref{eq:Ccurve}.
Then
\[
C(\mathbb{Q})=\bigl\{\infty,\ (-1,0),\ (0,\pm 1),\ (1,\pm 8)\bigr\}.
\]
\end{proposition}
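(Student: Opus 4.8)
The plan is to determine $C(\mathbb{Q})$ exactly for the genus-$2$ curve $C:\ w^{2}=t^{5}+21t^{4}+26t^{3}+10t^{2}+5t+1$ by the standard descent-plus-Chabauty pipeline. First I would locate the obvious rational points: plugging in small values of $t$ gives $(t,w)=(-1,0)$, $(0,\pm1)$, $(1,\pm8)$, together with the single point $\infty$ at infinity (the model is odd degree $5$, so there is one rational point above infinity, which is moreover a Weierstrass point). This produces the candidate set displayed in the statement; the task is to show there are no others.

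Next I would compute the Mordell--Weil rank of the Jacobian $J=\operatorname{Jac}(C)$. Using \textsc{Magma}'s descent machinery (\texttt{RankBound}, or a $2$-descent via \texttt{TwoSelmerGroup}/\texttt{JacobianDescent}), I expect to obtain $\operatorname{rank} J(\mathbb{Q})\le 1$; since the differences of the known rational points already generate a subgroup of rank $\ge 1$ in $J(\mathbb{Q})$ (the class of $[(0,1)]-[\infty]$ has infinite order, as one checks it is not torsion by reducing modulo a couple of small primes), the rank is exactly $1$. Because $\operatorname{rank} J(\mathbb{Q}) = 1 < 2 = \operatorname{genus}(C)$, Chabauty's method applies.

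Then I would run Chabauty's method at a suitable prime $\ell$ of good reduction (e.g.\ the smallest prime for which the reduction is nonsingular and behaves well), using \textsc{Magma}'s \texttt{Chabauty} function with the known rank-$1$ generator of $J(\mathbb{Q})$. This yields an explicit upper bound on $\#C(\mathbb{Q})$; if the Chabauty bound at a single prime equals $6$, we are done immediately, and otherwise one combines the $\ell$-adic Chabauty count with the Mordell--Weil sieve (intersecting the images of $C(\mathbb{Q})$ in $J(\mathbb{Q})/NJ(\mathbb{Q})$ over several primes) to eliminate the remaining residue classes. In either case the output matches the six points already exhibited, so $C(\mathbb{Q})=\{\infty,(-1,0),(0,\pm1),(1,\pm8)\}$.

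The main obstacle is the rank computation: if a naive $2$-descent does not pin down $\operatorname{rank} J(\mathbb{Q})$ (for instance if the $2$-Selmer bound is $2$ rather than $1$), one must either perform a deeper descent, invoke the analytic rank under BSD as a heuristic check and then verify unconditionally via a visibility or Cassels--Tate pairing argument, or — since the quintic factors as $(t+1)(t^{4}+20t^{3}+6t^{2}+4t+1)$ — exploit the resulting isogeny decomposition of $J$ to reduce to rank bounds on lower-dimensional factors. Once the rank is provably $1$, the Chabauty step and the verification that the six found points are all of $C(\mathbb{Q})$ are routine for \textsc{Magma}, and the computation can be packaged as a reproducible script serving as the certificate.
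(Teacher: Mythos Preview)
Your proposal is correct and follows essentially the same approach as the paper: search for small points, use \textsc{Magma}'s \texttt{RankBound} to get $\operatorname{rank} J(\mathbb{Q})\le 1$, verify that $[(0,1)]-[\infty]$ has infinite order (so the rank is exactly $1$), and then apply \texttt{Chabauty} with this generator to certify that the six listed points are all of $C(\mathbb{Q})$. The paper's proof is precisely this pipeline packaged into a short \textsc{Magma} script; your discussion of fallback strategies (Mordell--Weil sieve, deeper descent, isogeny decomposition) turns out to be unnecessary here since the direct Chabauty call already returns exactly six points.
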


\begin{proof}[Computational proof in \textsc{Magma}]
This is certified by Script~02 in Appendix~\ref{app:scripts}.
The script computes a rank bound $\mathrm{RankBound}(J)=1$ for the Jacobian $J=\mathrm{Jac}(C)$, constructs a Jacobian point of infinite order (verified by \texttt{Order(pt\_J) eq 0}), and then applies \texttt{Chabauty(pt\_J)} (based on the method of Chabauty~\cite{Chabauty} and Coleman~\cite{Coleman}) to obtain all rational points on $C$.
The complete output list agrees with the set above.
\end{proof}

\begin{lemma}[No admissible parameters from $C(\mathbb{Q})$]\label{lem:noadmissible}
Let $(t,w)\in C(\mathbb{Q})$ and let $U=U(t)$ be as in \eqref{eq:Uparam} whenever defined.
Then the only positive rational value of $s$ satisfying $U=s+1/s$ is $s=1$.
\end{lemma}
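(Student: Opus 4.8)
The plan is to evaluate $U(t)$ from \eqref{eq:Uparam} at each of the finitely many points of $C(\mathbb{Q})$ listed in Proposition~\ref{prop:CQ} and, for each value obtained, solve $s+1/s=U(t)$ for positive rational $s$. Since the hyperelliptic involution $(t,w)\mapsto(t,-w)$ does not change the $t$-coordinate and $U$ depends only on $t$, it suffices to treat the four distinct $t$-values that occur, namely $t\in\{\infty,-1,0,1\}$.

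For $t=\infty$ I would compare leading terms in \eqref{eq:Uparam}: the numerator is $2t^{5}+\cdots$ and the denominator is $t^{5}+\cdots$, so $U(\infty)=2$, and $s+1/s=2$ has the unique rational solution $s=1$ — precisely the excluded parameter. For $t=-1$, direct substitution in \eqref{eq:Uparam} gives numerator $2\bigl((-1)+6-30+16-9+2\bigr)=-32$ and denominator $(-1)(-2)^{2}(1-6+1)=16$, hence $U(-1)=-2$; then $s+1/s=-2$ forces $s=-1\notin\mathbb{Q}_{>0}$, so this point yields no admissible parameter. Finally, for $t=0$ and $t=1$ the denominator $t(t-1)^{2}(t^{2}+6t+1)$ of \eqref{eq:Uparam} vanishes while the numerator $2(t^{5}+6t^{4}+30t^{3}+16t^{2}+9t+2)$ does not (it equals $4$ at $t=0$ and $128$ at $t=1$), so $U(t)$ is undefined at these two points and they impose no condition.

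Collecting the cases, the only positive rational $s$ with $s+1/s=U(t)$ for some $(t,w)\in C(\mathbb{Q})$ is $s=1$, attained at $t=\infty$. I do not expect any genuine obstacle here: the rational-point set of $C$ is already pinned down exactly in Proposition~\ref{prop:CQ}, so the argument reduces to four explicit evaluations. The only points requiring care are the place at infinity — handled by the leading-term computation — and the two base points $t\in\{0,1\}$ of the line parametrization, where $U$ is genuinely undefined; this is exactly why the statement carries the qualifier "whenever defined".
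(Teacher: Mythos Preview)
Your proof is correct and follows essentially the same approach as the paper: both enumerate the four $t$-values coming from $C(\mathbb{Q})$, compute $U(\infty)=2$ and $U(-1)=-2$ with the same arithmetic, and observe that $U$ is undefined (a pole) at $t=0$ and $t=1$. The only cosmetic difference is that the paper phrases the $t\in\{0,1\}$ case as $|U(t)|\to\infty$ forcing $s\to 0$ or $s\to\infty$, whereas you invoke the ``whenever defined'' clause directly; both readings dispose of those points.
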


\begin{proof}
By Proposition~\ref{prop:CQ}, the rational points on $C$ correspond to $t \in \{-1, 0, 1, \infty\}$. We determine the admissible values of $s$ by solving $s+1/s = U(t)$ using the expression \eqref{eq:Uparam}.

\smallskip
\noindent\emph{Case $t=\infty$.}
Comparing the degrees of the numerator and denominator in \eqref{eq:Uparam}, we find $\lim_{t\to\infty} U(t) = 2$. The equation $s+1/s=2$ is equivalent to $(s-1)^2=0$, yielding the unique solution $s=1$. This corresponds to the excluded case $p=q$.

\smallskip
\noindent\emph{Case $t=-1$.}
Substituting $t=-1$ into \eqref{eq:Uparam} yields
\[
U(-1) = \frac{2\,(-1+6-30+16-9+2)}{(-1)\,(-2)^{2}\,(1-6+1)} = \frac{-32}{16} = -2.
\]
The equation $s+1/s=-2$ implies $(s+1)^2=0$, so $s=-1$. However, the cuboid parameter $s=(p/q)^2$ must be positive, so this solution is inadmissible.

\smallskip
\noindent\emph{Cases $t=0$ and $t=1$.}
The denominator of $U(t)$ in \eqref{eq:Uparam} contains the factors $t$ and $(t-1)^2$, so it vanishes at $t=0$ and $t=1$. Evaluating the numerator at these points yields $4$ (for $t=0$) and $128$ (for $t=1$), which are non-zero.
Consequently, $t=0$ and $t=1$ are poles of the rational function $U(t)$, implying $|U(t)| \to \infty$.
The relation $s+1/s=U$ implies that $s \to 0$ or $s \to \infty$. Since we require $s \in \mathbb{Q}_{>0}$ (a finite non-zero rational), these points yield no admissible parameters.

\smallskip
\noindent\emph{Conclusion.}
The only positive rational $s$ arising from $C(\mathbb{Q})$ is $s=1$.
\end{proof}

\begin{theorem}[No $1+4$ factorization for $P_s$; no rational roots of $Q_{p,q}(t)$]\label{thm:main}
Let $s\in\mathbb{Q}_{>0}$ with $s\neq 1$.
Then the quintic $P_s(x)$ has no rational root (equivalently, it admits no $1+4$ factorization over $\mathbb{Q}$).

In particular, for coprime $p,q\in\mathbb{Z}_{>0}$ with $p\neq q$, setting $s=(p/q)^2$, the second cuboid polynomial $Q_{p,q}(t)$ has no rational roots (hence no linear factor over $\mathbb{Q}$, and in particular no linear factor over $\mathbb{Z}$).
\end{theorem}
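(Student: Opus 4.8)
The plan is to assemble the chain of reductions established in Sections~\ref{sec:poly}--\ref{sec:points} and combine them with the computation of $C(\mathbb{Q})$ from Proposition~\ref{prop:CQ}. Suppose, for contradiction, that there exists $s\in\mathbb{Q}_{>0}$ with $s\neq 1$ such that $P_s(x)=0$ has a rational solution $x_0$. Since $P_s(0)=-s^5\neq 0$, we have $x_0\neq 0$, so we may write $x_0=sy_0$ with $y_0\in\mathbb{Q}\setminus\{0\}$; by Lemma~\ref{lem:F} this gives $F(s,y_0)=0$, so $(s,y_0)\in(\mathbb{Q}\setminus\{0\})^2$ is a rational point on the curve $F=0$.

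Next I would push $(s,y_0)$ through the invariant coordinates. Set $U=s+1/s$ and $V=y_0+1/y_0$; by Proposition~\ref{prop:G} we get $G(U,V)=0$. Since $s\neq 1$, Lemma~\ref{lem:squares} (or a direct check) gives $U\neq 2$, so $t:=(V+2)/(U-2)\in\mathbb{Q}$ is well-defined, and by Proposition~\ref{prop:param} the pair $(U,V)$ equals $(U(t),V(t))$. Applying Proposition~\ref{prop:hyper}, the point $(s,y_0)$ produces a rational point $(t,w)\in C(\mathbb{Q})$ on the genus-$2$ hyperelliptic curve~\eqref{eq:Ccurve}, with the same value of $t$. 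By Proposition~\ref{prop:CQ}, $C(\mathbb{Q})=\{\infty,(-1,0),(0,\pm1),(1,\pm8)\}$, so the associated $t$-coordinate lies in $\{\infty,-1,0,1\}$. Now invoke Lemma~\ref{lem:noadmissible}: none of these $t$-values yields an admissible $s\in\mathbb{Q}_{>0}$ other than $s=1$. This contradicts $s\neq 1$, proving the first assertion: for every $s\in\mathbb{Q}_{>0}$ with $s\neq 1$, the quintic $P_s$ has no rational root, hence (by Definition~\ref{def:14}) admits no $1+4$ factorization over $\mathbb{Q}$.

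For the cuboid consequence, let $p,q\in\mathbb{Z}_{>0}$ be coprime with $p\neq q$ and set $r=p/q$, $s=r^2$. By Remark~\ref{rem:domain} we have $s\in\mathbb{Q}_{>0}$ and $s\neq 1$. If $Q_{p,q}(t)$ had a rational root $t_0$, then by the final claim of Lemma~\ref{lem:14_28} the quintic $P_s$ would have the rational root $x_0=(t_0/q^2)^2$, contradicting the first part. Hence $Q_{p,q}(t)$ has no rational root; in particular it has no linear factor over $\mathbb{Q}$, and a fortiori none over $\mathbb{Z}$.

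The only genuinely substantive step is the input Proposition~\ref{prop:CQ}, i.e.\ the exact determination of $C(\mathbb{Q})$ via a rank bound and Chabauty's method on $\mathrm{Jac}(C)$; everything else is bookkeeping through the already-established reductions. I expect the main subtlety in writing the proof to be the careful handling of the degenerate $t$-values ($t\in\{0,1,\infty\}$), where the parametrization~\eqref{eq:Uparam}--\eqref{eq:Vparam} has poles or the map to $C$ degenerates—but this is precisely what Lemma~\ref{lem:noadmissible} dispatches, so the theorem follows by citing it.
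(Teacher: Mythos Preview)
Your proposal is correct and follows essentially the same route as the paper's proof: contradiction via Lemma~\ref{lem:F}, reduction to a rational point on $C$ via Proposition~\ref{prop:hyper}, the explicit list $C(\mathbb{Q})$ from Proposition~\ref{prop:CQ}, and elimination of all resulting $t$-values by Lemma~\ref{lem:noadmissible}, followed by Lemma~\ref{lem:14_28} for the $Q_{p,q}$ consequence. The only difference is cosmetic: you unpack the passage through $(U,V)$, $G=0$, and the parametrization (Propositions~\ref{prop:G} and~\ref{prop:param}) explicitly, whereas the paper simply invokes Proposition~\ref{prop:hyper}, which already packages those steps; also, the justification ``$U\neq 2$'' is really the elementary observation $s+1/s=2\iff s=1$ rather than Lemma~\ref{lem:squares}, but your parenthetical ``direct check'' covers this.
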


\begin{proof}
Assume, for contradiction, that $P_s(x)$ has a rational root for some $s\in\mathbb{Q}_{>0}$ with $s\neq 1$.
Let $x=sy$ as in Lemma~\ref{lem:F}.
Then $F(s,y)=0$ for some $y\in\mathbb{Q}$.
By Proposition~\ref{prop:hyper} (using $s\neq 1$) there exists a rational point $(t,w)\in C(\mathbb{Q})$ on the hyperelliptic curve \eqref{eq:Ccurve}.
By Proposition~\ref{prop:CQ} this point is among $\{\infty,(-1,0),(0,\pm1),(1,\pm8)\}$.
Lemma~\ref{lem:noadmissible} shows that the only positive rational $s$ arising from these points is $s=1$, contradicting $s\neq 1$.

Therefore $P_s$ has no rational root for $s>0$ with $s\neq 1$.

For the stated consequence for $Q_{p,q}(t)$, suppose that $Q_{p,q}(t)$ had a rational root $t_0\in\mathbb{Q}$.
Then by Lemma~\ref{lem:14_28} (final sentence) the corresponding normalized parameter $s=(p/q)^2$ would yield a rational root of $P_s$, contradicting what we have just proved.
Hence $Q_{p,q}(t)$ has no rational roots.
\end{proof}


\section*{Acknowledgments}
The author would like to thank Randall L. Rathbun for helpful discussions and correspondence regarding the problem.


\appendix
\section{\textsc{Magma} scripts and transcripts}\label{app:scripts}

\noindent
All computer-assisted steps in this note are executed in \textsc{Magma}.
Script~01 is a diagnostic computation showing that the naive plane curve $S(x,r)=0$ (encoding rational roots directly) has genus $6$.
Script~02 is the certificate computation used in the main proof: it computes the rational points of the genus-$2$ curve $C$ via a rank bound and Chabauty on the Jacobian.

\subsection*{Script 01: Genus of the naive rational-root curve $S(x,r)=0$}
\noindent\textbf{Code.}
\begin{code}
// Setup Ring and Polynomial
Q := Rationals();
R<x, r> := PolynomialRing(Q, 2);

// Coefficients
A := (2 + r^2)*(3 - 2*r^2);
B := 1 + 10*r^2 + 4*r^4 - 14*r^6 + r^8;
C := -r^2*(1 - 14*r^2 + 4*r^4 + 10*r^6 + r^8);
D := -r^6*(1 + 2*r^2)*(-2 + 3*r^2);
Const := -r^10;

// The polynomial S(x, r) itself
S := x^5 + A*x^4 + B*x^3 + C*x^2 + D*x + Const;
print "Polynomial S(x, r) constructed.";

// If S(x) has a root, then the point (x,r) lies on the curve S(x,r) = 0.
// Define Affine Space and Curve
A2 := AffineSpace(R);
C_aff := Curve(A2, S);

// Take Projective Closure to compute the genus correctly
C_proj := ProjectiveClosure(C_aff);

// Compute the geometric genus, resolving singularities internally if the curve is singular.
g_root := Genus(C_proj);

printf "Curve equation defined by S(x, r) = 0\n";
printf "Geometric Genus (1+4 case): 
\end{code}

\noindent\textbf{Transcript.}
\begin{term}
Polynomial S(x, r) constructed.
Curve equation defined by S(x, r) = 0
Geometric Genus (1+4 case): 6
\end{term}

\subsection*{Script 02: Rational points on $C$ via Jacobian rank bound and Chabauty}
\noindent\textbf{Code.}
\begin{code}
// Setup Polynomial ring and Hyperelliptic curve
Q<t> := PolynomialRing(Rationals());
f := t^5 + 21*t^4 + 26*t^3 + 10*t^2 + 5*t + 1;
C := HyperellipticCurve(f);
J := Jacobian(C);

// 1. Find "naive" rational points (heuristic search)
pts := RationalPoints(C : Bound := 1000);
print "Found points:", pts;

// 2. Compute the Rank Bound of the Jacobian
r := RankBound(J);
print "Rank Bound:", r; 

if r eq 1 then
    print "Rank is 1. Using Chabauty (requires a generator)...";
    
    // We need a point of infinite order on the Jacobian.
    // We take the difference of two points on the curve: P_rat - P_inf
    
    // Point (1:0:0) represents Infinity on this model.
    P_inf := C![1, 0, 0]; 
    
    // Select a rational point (0 : 1 : 1) from the found list 'pts'
    // Note: Avoid points with y=0 (Weierstrass points) as they are torsion.
    P_rat := C![0, 1, 1]; 
    
    // Construct a point on the Jacobian: D = [P_rat - P_inf]
    // Note: We use a sequence [P1, P2] to define the divisor class P1-P2
    pt_J := J ! [P_rat, P_inf];
    
    // Verify the point is not torsion (Order must be 0 for infinite order)
    if Order(pt_J) eq 0 then
        // Run classical Chabauty
        final_points := Chabauty(pt_J);
        print "All proven rational points:", final_points;
    else
        print "Error: Selected point has finite order (torsion). Try a different P_rat.";
    end if;

elif r eq 0 then
    print "Rank is 0. Using Chabauty0...";
    // Chabauty0 is specifically for Rank 0 cases
    print Chabauty0(J);
else
    print "Rank >= Genus. Chabauty method is not applicable.";
end if;
\end{code}

\noindent\textbf{Transcript.}
\begin{term}
Found points: {@ (1 : 0 : 0), (-1 : 0 : 1), (0 : -1 : 1), (0 : 1 : 1), (1 : -8 :
1), (1 : 8 : 1) @}
Rank Bound: 1
Rank is 1. Using Chabauty (requires a generator)...
All proven rational points: { (1 : -8 : 1), (0 : -1 : 1), (1 : 8 : 1), (-1 : 0 :
1), (0 : 1 : 1), (1 : 0 : 0) }
\end{term}


\begingroup
\footnotesize

\endgroup

\end{document}